\definecolor{mygray}{gray}{0.85}
\renewcommand{\leq}{\leqslant}
\renewcommand{\geq}{\geqslant}
\def\subsection{\@startsection{subsection}{3}%
  \z@{.5\linespacing\@plus.7\linespacing}{.3\linespacing}%
  {\bfseries\centering}}
\def\subsubsection{\@startsection{subsubsection}{3}%
  \z@{.5\linespacing\@plus.7\linespacing}{.3\linespacing}%
  {\centering}}
\def\myfnt{\ifx\protect\@typeset@protect\expandafter\footnote\else\expandafter\@gobble\fi}
\newtheorem{theorem}{Theorem}[section]
\newtheorem{corollary}[theorem]{Corollary}
\newtheorem{definition}[theorem]{Definition}
\newtheorem{fact}[theorem]{Fact}
\newtheorem{notation}[theorem]{Notation}
\newtheorem{cclaim}[theorem]{Claim}
\newtheorem{definition-proposition}[theorem]{Definition/Proposition}
\newcounter{claimcounter}
\numberwithin{claimcounter}{theorem}
\newcommand{\mrm}[1]{\mathrm{#1}}
\begin{document}

\begin{abstract} In \cite{1205} we proved that the space of countable torsion-free abelian groups is Borel complete. In this paper we show that our construction from \cite{1205} satisfies several additional properties of interest. We deduce from this that countable torsion-free abelian groups are faithfully Borel complete, in fact, more strongly, we can $\mathfrak{L}_{\omega_1, \omega}$-interpret countable graphs in them. Secondly, we show that the relation of pure embeddability (equiv., elementary embeddability) among countable \mbox{models of $\mrm{Th}(\mathbb{Z}^{(\omega)})$ is a complete analytic quasi-order.}
\end{abstract}

\title[Torsion-free abelian groups are faithfully Borel complete]{Torsion-free abelian groups are faithfully Borel complete and pure embeddability is a complete analytic quasi-order}


\thanks{No. 1248 on Shelah's publication list. Research of the first author was supported by project PRIN 2022 ``Models, sets and classifications", prot. 2022TECZJA and by INdAM Project 2024 (Consolidator grant) ``Groups, Crystals and Classifications''. Research of the second author was partially supported by Israel Science Foundation (ISF) grants no: 1838/19 and 2320/23.}

\author{Gianluca Paolini}
\address{Department of Mathematics ``Giuseppe Peano'', University of Torino, Via Carlo Alberto 10, 10123, Italy.}
\email{gianluca.paolini@unito.it}

\author{Saharon Shelah}
\address{Einstein Institute of Mathematics,  The Hebrew University of Jerusalem, Israel \and Department of Mathematics,  Rutgers University, U.S.A.}

\date{\today}
\maketitle


\section{Introduction}

	In \cite{1205} we showed that the Borel space of countable torsion-free abelian groups ($\mrm{TFAB}_\omega$) is as complex as possible in terms of classification up to isomorphism, resolving a major conjecture of Friedman and Stanley from 1989 (cf. \cite{friedman_and_stanley}). The aim of this paper is to show that our construction from \cite{1205} satisfies several additional properties of interest, which imply stronger anti-classification results for the space $\mrm{TFAB}_\omega$. In particular, we will prove the following (see what follows for a discussion):

	\begin{theorem}\label{first_theorem} $\mrm{TFAB}_\omega$ is a faithfully Borel complete class of structures. Furthermore, we can $\mathfrak{L}_{\omega_1, \omega}$-interpret the space $\mrm{Graphs}_\omega$ (graphs with domain $\omega$) into the space $\mrm{TFAB}_\omega$ (torsion-free abelian groups with domain $\omega$).
\end{theorem}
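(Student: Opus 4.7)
The plan is to establish the stronger claim --- $\mathfrak{L}_{\omega_1,\omega}$-interpretability of $\mrm{Graphs}_\omega$ into $\mrm{TFAB}_\omega$ --- from which faithful Borel completeness follows at once: any $\mathfrak{L}_{\omega_1,\omega}$-interpretation is by construction invariant under automorphisms of the ambient structure, so the induced map $\mrm{Aut}(A_G) \to \mrm{Aut}(G)$ is a Borel left-inverse to the lift of automorphisms coming from the construction. The work therefore reduces to exhibiting parameter-free $\mathfrak{L}_{\omega_1,\omega}$-formulas $\varphi_V(x)$, $\varphi_\sim(x,y)$, $\varphi_E(x,y)$ such that, uniformly in the input graph $G$, the quotient $(\varphi_V(A_G)/\varphi_\sim,\, \varphi_E)$ is canonically isomorphic to $G$.

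Concretely, I would revisit the construction $G \mapsto A_G$ of \cite{1205} and isolate the distinguished family of ``vertex elements'' $\{a_v : v \in V(G)\} \subseteq A_G$ used to encode the vertices of $G$. Such elements are normally characterised by their divisibility pattern with respect to a fixed family of primes assigned to vertex labels: an element of $A_G$ is a vertex representative precisely when it is divisible by an appropriate countable set of primes and non-divisible by others in a prescribed way, which translates directly into an $\mathfrak{L}_{\omega_1,\omega}$-formula $\varphi_V(x)$. The equivalence $x \sim y$ (e.g.\ ``$x$ and $y$ generate the same pure rank-one subgroup'') is similarly $\mathfrak{L}_{\omega_1,\omega}$-definable by countably many divisibility conditions. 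The edge predicate is read off from the presence of an ``edge element'' $a_{\{u,v\}}$ linking $a_u$ and $a_v$ in the construction, and its defining divisibility relations again admit an $\mathfrak{L}_{\omega_1,\omega}$ description.

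The core technical obstacle is verifying that these formulas interpret $G$ faithfully: that there are no spurious realisations of $\varphi_V$ modulo $\varphi_\sim$ and no spurious $\varphi_E$-pairs in $A_G$. This is precisely the sharpened form of the rigidity of the construction of \cite{1205}: the proof of Borel completeness there yields $A_G \cong A_{G'} \iff G \cong G'$, but the interpretation demands more, namely that every element of $A_G$ realising the prescribed divisibility type lies in the designated vertex orbit. I expect this to be carried out by extracting from \cite{1205} an explicit normal-form or pure-closure description of $A_G$ in terms of the generators $\{a_v\} \cup \{a_{\{u,v\}}\}$, and then showing by a type-computation that elements outside the designated pure rank-one subgroups cannot realise $\varphi_V$, and that non-adjacent vertex pairs cannot satisfy $\varphi_E$. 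Once this ``no accidents'' fact is in place, the rest is formal: any isomorphism $A_G \to A_{G'}$ sends $\varphi_V$-realisations to $\varphi_V$-realisations and preserves $\varphi_\sim$ and $\varphi_E$, hence descends to an isomorphism $G \to G'$, and the specialisation to automorphisms gives the Borel inverse required for faithful Borel completeness.
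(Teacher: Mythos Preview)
Your high-level strategy matches the paper's: build $\mathfrak{L}_{\omega_1,\omega}$-formulas from divisibility patterns, quotient by the relation ``same pure rank-one subgroup'' (the paper's $\mathcal{E}_\star = \{(a,b): ma = nb \text{ for some } m,n \in \mathbb{Z}^+\}$), and then verify a ``no accidents'' claim to rule out spurious realisations. The paper carries this out in a restricted fragment generated by $p^m \mid x$, $p^m \mid (x-y)$, $nx = ky$, and equality.

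Where your proposal goes wrong is in its picture of the construction from \cite{1205}. That reduction does \emph{not} send a graph $G$ to a group $A_G$ with distinguished vertex elements $a_v$ and edge elements $a_{\{u,v\}}$; it factors through the class $\mathbf{K}^{\mrm{eq}}$ of models of two equivalence relations. One fixes the countable universal homogeneous $M \in \mathbf{K}^{\mrm{eq}}$ and a single group $G_1$ whose basis $X$ is partitioned as $\bigcup_{s \in M} X'_s$, and the Borel map is $\mathcal{U} \mapsto G_{(1,\mathcal{U})} = \langle X_{\mathcal{U}} \rangle^*_{G_1}$ for $\mathcal{U} \subseteq M$. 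Consequently the paper does not interpret a graph directly: it interprets $M \restriction \mathcal{U}$ (as a $\mathbf{K}^{\mrm{eq}}$-structure) inside $G_{(1,\mathcal{U})}$, via formulas $\psi^{\hat{x}}(v)$ and $\psi^{\hat{x}-\hat{y}}(v)$ recording the divisibility types of basis elements and their differences, with the ``no accidents'' steps ($(\star_6)$, $(\star_7)$, $(\star_{10})$) relying on the combinatorial frame's equivalence relations $E^{\mathfrak{m}}_n$. The passage from $\mathbf{K}^{\mrm{eq}}_\omega$ to $\mrm{Graphs}_\omega$ is then a separate, folklore first-order interpretation. So there are no $a_v$ or $a_{\{u,v\}}$ to isolate; once you replace ``graph'' by ``model of two equivalence relations'' and adjust your formulas to track the $\mathfrak{E}_i$ rather than an edge relation, your outline becomes the paper's proof.
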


	\begin{theorem}\label{second_theorem} The pure embeddability relation on $\mrm{TFAB}_\omega$ is a complete analytic quasi-order. In fact, more strongly,  elementary embeddability (equiv., pure embeddability) between countable models of $\mrm{Th}(\mathbb{Z}^{(\omega)})$ is a complete analytic quasi-order.
\end{theorem}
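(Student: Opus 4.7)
The plan is to repurpose the construction $G \mapsto H(G)$ of \cite{1205} as a Borel reduction from the relation of embeddability on countable graphs to the relation of pure embeddability on countable models of $\mathrm{Th}(\mathbb{Z}^{(\omega)})$. Since embeddability of countable graphs is a complete analytic quasi-order (Louveau--Rosendal), and pure embeddability is manifestly $\Sigma^1_1$, such a reduction establishes the stronger form of the theorem, with the statement for all of $\mathrm{TFAB}_\omega$ an immediate consequence.

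The first task is functoriality: a graph embedding $f: G \hookrightarrow G'$ should induce a pure group embedding $H(f): H(G) \hookrightarrow H(G')$. Given the uniform generators-and-relations nature of the presentation in \cite{1205}, this ought to be a routine verification, reducing essentially to checking that no new $p$-divisibility is introduced by the extension. In parallel, I would verify that each $H(G)$ is elementarily equivalent to $\mathbb{Z}^{(\omega)}$ by reading off the Szmielew invariants directly from the construction: torsion-freeness is built in, $H(G)/pH(G)$ should be infinite for every prime $p$ because the construction uses countably many independent ``free'' directions, and there should be no divisible summand because only finitely many denominators appear locally around any element.

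The real crux is the opposite direction: every pure embedding $\varphi: H(G) \to H(G')$ arises from a graph embedding $G \hookrightarrow G'$. The rigidity arguments of \cite{1205} recover $G$ from the isomorphism type of $H(G)$ by identifying vertex-coding and edge-coding elements via purely algebraic conditions; I would refine those conditions so that they are expressed by (infinitary) conjunctions of positive-primitive formulas, or at least by invariants preserved under pure inclusions. The ``vertex-like'' elements and the adjacency patterns among them should be characterized by $p$-divisibility criteria that propagate through any pure extension, forcing $\varphi$ to send the encoded copy of $G$ inside $H(G)$ onto the encoded copy of some subgraph of $G'$ inside $H(G')$, in an adjacency-preserving way.

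Once this rigidity is in place, the theorem follows by combining it with the classical Eklof--Fisher fact that a pure embedding between elementarily equivalent abelian groups is elementary, which simultaneously yields the equivalence of pure and elementary embeddability on countable models of $\mathrm{Th}(\mathbb{Z}^{(\omega)})$. The principal obstacle is the rigidity step: pure embeddings are one-sided, so the double-sided rigidity arguments of \cite{1205} do not transfer verbatim, and one must pin down the encoded graph via preservation-type invariants rather than symmetry-type ones. This is presumably the substance of the ``additional properties'' of the construction advertised in the abstract, and is where I expect the most delicate analysis to live.
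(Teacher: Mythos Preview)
Your plan is essentially the paper's approach: reduce graph embeddability (Louveau--Rosendal) to pure embeddability via the construction of \cite{1205}, with the backward implication carried by an encoding in formulas absolute for pure extensions, and then invoke the pure-equals-elementary fact for abelian groups. Two minor divergences are worth flagging. First, the construction of \cite{1205} is not from graphs but from $\mathbf{K}^{\mrm{eq}}_\omega$ (two equivalence relations), so the paper inserts an easy intermediate reduction from graph embeddability to $\mathbf{K}^{\mrm{eq}}$-embeddability (Fact~\ref{embed_2equiv}). Second, the forward direction is much simpler than the generators-and-relations check you outline: since the map is $\mathcal{U} \mapsto \langle X_{\mathcal{U}} \rangle^*_{G_1}$ with all images sitting as pure subgroups of one ambient $G_1$, and since $M$ is universal homogeneous, an embedding $H_1 \hookrightarrow H_2$ may be realized as an inclusion $\mathcal{U}_1 \subseteq \mathcal{U}_2$, and then the pure embedding is just inclusion of pure closures. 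Your ``crux'' step---recovering the source structure via invariants preserved under pure inclusion---is exactly the $\mathfrak{L}^{\mrm{pure}}_{\aleph_1,\aleph_0}$-interpretation of Claim~\ref{intepretation_claim} together with Fact~\ref{purity_fact_interpretation}; the paper proves this for Theorem~\ref{first_theorem} and then the short proof of Theorem~\ref{second_theorem} simply asserts the embeddability equivalence, with that machinery standing behind it.
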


	The property of Borel completeness for the space of countable models of a theory in $\mathfrak{L}_{\omega_1, \omega}$ is probably the most well-known anti-classification property in terms of classification up to isomorphism, as it literally says that the isomorphism relation on such a class reduces in a Borel way the isomorphism relation on countable models of {\em any} theory in $\mathfrak{L}_{\omega_1, \omega}$. But, actually, stronger forms of anti-classification are known in the literature, for example, the fact that countable graphs can be first-order interpreted in countable groups (cf. e.g. \cite{mekler}) is widely agreed to be a much stronger result than the Borel completeness of the space of countable groups. This line of thought was already addressed by Friedman and Stanley in their seminal paper on Borel reducibility \cite{friedman_and_stanley}, in fact, abstracting from the model theoretic notion of interpretability, they\footnote{The use of the term {\em faithful} to denote this property was introduced only later, cf. \cite[pg.~300]{gao}.} introduced the following strengthening of Borel completeness:
	
	\begin{definition} Let $\mathbf{K}_\omega$ be the Borel space of models with domain $\omega$ of a $\mathfrak{L}_{\omega_1, \omega}$-theory. The space $\mathbf{K}_\omega$ is said to be {\em faithfully} Borel complete if there is a Borel reduction $\mathbf{F}$ from $\mrm{Graph}_\omega$ (graphs with domain $\omega$) into $\mathbf{K}_\omega$ such that for any invariant Borel subset $X$ of $\mrm{Graph}_\omega$ the closure under isomorphism of the image of $X$ under $\mathbf{F}$ is Borel.
\end{definition}

	It is well-known that countable groups are faithfully Borel complete and that furthermore there is a first-order interpretation of countable graphs into countable groups. On the other hand, any first-order theory of abelian groups is known to be stable and so we cannot expect to have a first-order interpretation of countable graphs in countable abelian groups. Our Theorem~\ref{first_theorem} is then the next best possible result in this respect; additionally the interpretation can also be taken to be with respect to very simple formulas, see Notation~\ref{the_pure_formulas_notaiton} for details. In \cite{friedman_and_stanley} one of the main motivations for the introduction of the notion of faithful Borel completeness was that whenever this property holds for $T$, then the full Vaught's conjecture reduces to the Vaught's conjecture for $\mathfrak{L}_{\omega_1, \omega}$-theories extending $T$, in particular we deduce:
	
	\begin{corollary} Vaught's conjecture is equivalent to Vaught's conjecture for $\mathfrak{L}_{\omega_1, \omega}$-theories of torsion-free abelian groups of infinite rank.
\end{corollary}  

	We now comment on Theorem~\ref{second_theorem}. In recent years, descriptive set theorists have been paying attention to other equivalence relations or quasi-orders among classes of countable structures. In particular, among many other interesting results, in \cite{LouRos} it was shown that the embeddability relation between countable graphs is a complete analytic quasi-order, and so the relation of bi-embeddability among countable graphs is a complete analytic equivalence relation. Despite this, not much seems to be known in terms of analysis of the relation of elementary embeddability, apart from reference \cite{rossenger}, where it is shown that this relation when considered between countable graphs is a complete analytic quasi-order. In particular, a careful analysis of the complexity of the relation of elementary embeddability between the countable models of familiar complete first-order theories does not seem to be addressed in the literature (notice that on the other hand in terms of complexity of isomorphism the situation is much different, as e.g. for any complete first-order theory $T$ of Boolean algebras we know the exact complexity of the relation of isomorphism between the countable models of $T$, see \cite{camerlo}). In this respect our Theorem \ref{second_theorem} seems to be particularly relevant, and we hope that it will inspire further research on the topic. Finally, we want to mention that in \cite{calderoni} it was proved that the embeddability relation between countable abelian \mbox{groups is also a complete analytic quasi-order.}
	
%
%
%

	Some words of explanations on the structure of the paper are in order. In Section 2 we overview the construction from \cite{1205} referring to \cite{1205} for details. In Sections~3 and 4 we prove Theorems~\ref{first_theorem} and \ref{second_theorem}. We follow the same notation of \cite{1205}, so we invite the reader to refer to \cite{1205} for unexplained notation. We only recall:
	
	\begin{definition}\label{def_pure} Let $H \leq G$ be groups, we say that $H$ is pure in $G$, denoted by $H \leq_* G$, when if $h \in H$, $0 < n < \omega$, $g \in G$ and (in additive notation) $G \models ng = h$, then there is $h' \in H$ s.t. $H \models nh' = h$. Given $S \subseteq G$ we denote by $\langle S \rangle^*_S$ the pure subgroup generated by $S$ (the intersection of all the \mbox{pure subgroups of $G$ containing~$S$).}
\end{definition}

\section{Overview of the construction from \cite{1205}}\label{S3}

	The construction from \cite{1205} consists of two parts, one combinatorial and one group theoretic. We now overview both. First of all, in order to define the combinatorial part of the construction (referred to as the combinatorial frame in \cite{1205}) we need:
	
	\begin{itemize}
	\item $\mathbf{K}^{\mrm{eq}}$ is the class of models $N$ in a vocabulary $\{\mathfrak{E}_0, \mathfrak{E}_1, \mathfrak{E}_2\}$ such that each $\mathfrak{E}^N_i$ is an equivalence relation and $\mathfrak{E}^N_2$ is the equality relation. We use the symbol $\mathfrak{E}_i$ to avoid confusions, as the symbol $E_i$ also appears elsewhere.
	\item\label{universal_model} $M$ is the countable homogeneous universal model in $\mathbf{K}^{\mrm{eq}}$.
	\end{itemize}
	
	Practically, structures in $\mathbf{K}^{\mrm{eq}}$ are models of the theory of two equivalence relations, naming equality with $\mathfrak{E}^M_2$ is just a useful technical convenience which helps in some passages from \cite{1205}, that will also be used at the end of the proof of Theorem~\ref{second_theorem}.

\medskip 
	
	Now, a combinatorial frame is an object $\mathfrak{m}(M) = \mathfrak{m} = (X^\mathfrak{m}, \bar{X}^\mathfrak{m}, \bar{f}^\mathfrak{m}, \bar{E}^\mathfrak{m}) = (X, \bar{X}, \bar{f}, \bar{E})$ subject to several technical conditions, in particular $\bar{E}^{\mathfrak{m}} = \bar{E} = (E_n : 0 < n < \omega) = (E^{\mathfrak{m}}_n : 0 < n < \omega)$, and, for $0 < n < \omega$, 
	$E_n$ is an equivalence relation defined on injective $n$-sequences from $X$ (denoted as $\mrm{seq}_n(X)$). Although not made explicit here, all depends on $M$, as it depends on partial maps acting on $X$ given by $\bar{f} = (f_{\bar{g}} : \bar{g} \in \mathcal{G}_*)$, where $\mathcal{G}_*$ is made of sequences of partial isomorphisms of the universal model $M$. Furthermore, the set $X$ on which the partial maps $f_{\bar{g}}$'s act is partitioned into infinite pieces as $\bar{X} = (X'_s : s \in M)$. This allow us to define for every $\mathcal{U} \subseteq M$ the set $X_{\mathcal{U}} = \bigcup\{X'_s : s \in \mathcal{U} \}$.
These are the essential pieces of $\mathfrak{m}$.

\medskip 

	Now, given a combinatorial frame $\mathfrak{m}$ as above, we define a group $G_1 = G_1[\mathfrak{m}]$. This group will be some sort of universal model for our Borel reduction of $\mathbf{K}^{\mrm{eq}}_\omega$ into $\mrm{TFAB}_\omega$. Crucially, our group $G_1$ will have as basis (in the sense of abelian group theory) the set $X$ from the combinatorial frame $\mathfrak{m}= (X, \bar{X}, \bar{f}, \bar{E})$. The group $G_1$ encodes the combinatorial frame $\mathfrak{m}$ in a sophisticated manner, via divisibility conditions on elements of $G_1$. Now, given a set $\mathcal{U} \subseteq M$ we can consider the subset $X_\mathcal{U}$ of the basis $X$ and with it the group (recall the notation from \ref{def_pure}):
	$$G_{(1, \mathcal{U})}[\mathfrak{m}] = G_{(1, \mathcal{U})}[\mathfrak{m}(M)] = G_{(1, \mathcal{U})} = \langle y: y \in X_u, u \in \mathcal{U} \rangle^*_{G_1} = \langle X_\mathcal{U} \rangle^*_{G_1}.$$
	
	Essentially, the Borel reduction from \cite{1205} is the map $\mathcal{U} \mapsto G_{(1, \mathcal{U})}$.  In order to prove our results, we need a final piece of notation. Recall that the equivalence relations $\mathfrak{E}_i$ (for $i \in \{0, 1, 2\}$) are defined on the universal model $M$, while the group $G_1$ has as basis elements from $X = \bigcup\{X_s : s \in M \}$. We ``translate'' the equivalence relations $\mathfrak{E}^M_i$ on $M$ to equivalence relations $\mathcal{E}_i$ on $X$ as follows:

	\begin{definition}\label{the_auxiliary_eq} For $i < 3$, let:
	$$\mathcal{E}_i = \{ (x, y) : \text{ for some } (a, b) \in \mathfrak{E}^M_i, x \in X'_{a} \text{ and } y \in X'_{b}\}.$$
\end{definition}
	
This ends our overview of the construction from \cite{1205}. Evidently, a proper understanding of the details of the construction require the reader to refer to \cite{1205}, but we see no other way to explain the details of the construction without essentially reproducing the first \mbox{sections of \cite{1205} (as was done in a previous version of this paper).}

\section{Faithfulness}\label{faith_sec}



By an interpretation $\Gamma$ we mean as in \cite[pg. 212]{hodges}, so in particular we require the existence of objects $\partial_\Gamma$, $\phi_\Gamma$'s, and $f_\Gamma$ as there. In particular, the formula $\partial_\Gamma$ is referred to as the domain of the interpretation $\Gamma$. If all the formulas involved in the interpretation are $\mathfrak{L}_{\aleph_1, \aleph_0}$-formulas, then we talk of an $\mathfrak{L}_{\aleph_1, \aleph_0}$-interpretation. By an interpretation of a class of structures into another we mean as in \cite[Sec.~5.4(b)]{hodges}. 

	\begin{notation}\label{the_pure_formulas_notaiton}
	\begin{enumerate}[(1)]
	\item By $\mathfrak{L}^{\mrm{pure}}_{\aleph_1, \aleph_0}(\tau_{\mrm{AB}})$-interpretation we mean an $\mathfrak{L}_{\aleph_1, \aleph_0}$-interpretation in the language of abelian groups $\tau_{\mrm{AB}} = \{0, +, -\}$ which uses formulas in the closure of the following formulas by negation and countable conjunctions:
	$$\{p^m \, \vert \, x, \; p^m \, \vert \, (x-y), \; nx = ky, \; x = y : p \in \mathbb{P}, \; m, n, k < \omega\}.$$
	\item Below by ``definable'' we mean definable by a formula as in \ref{the_pure_formulas_notaiton}(1).
\end{enumerate}	 
\end{notation}

	\begin{fact}\label{purity_fact_interpretation} If $\varphi(\bar{x}) \in \mathfrak{L}^{\mrm{pure}}_{\aleph_1, \aleph_0}(\tau_{\mrm{AB}})$ and $G \leq_* H \in \mrm{AB}$, for $\bar{a} \in G^{\mrm{lg}(\bar{x})}$ we have that:
$$G \models \varphi(\bar{a}) \; \Leftrightarrow \; H \models \varphi(\bar{a}).$$
\end{fact}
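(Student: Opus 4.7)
The plan is a straightforward induction on the construction of $\varphi$ inside the class $\mathfrak{L}^{\mrm{pure}}_{\aleph_1, \aleph_0}(\tau_{\mrm{AB}})$, starting from the four atomic schemas listed in Notation~\ref{the_pure_formulas_notaiton}(1) and then closing under negation and countable conjunction. Since $G \leq H$ already gives us a subgroup inclusion, the negation and countable conjunction steps are automatic: once we know the equivalence holds for each formula in a countable family, it holds for their conjunction, and taking negation preserves the biconditional.

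The base cases to verify are the four atomic schemas. For $x = y$ and for $nx = ky$, the absoluteness between $G$ and $H$ is immediate from $G$ being a subgroup of $H$ (addition and scalar multiplication by integers agree on $G$-tuples). For the two divisibility schemas $p^m \mid x$ and $p^m \mid (x - y)$ evaluated at parameters $\bar{a} \in G^{\mrm{lg}(\bar{x})}$, the forward direction is trivial (a witness in $G$ is a witness in $H$), and the backward direction is precisely the definition of purity in Definition~\ref{def_pure}: if $H \models p^m h = a$ for some $a \in G$ and $h \in H$, then by $G \leq_* H$ there is $g \in G$ with $G \models p^m g = a$; the ``$p^m \mid (x-y)$'' case reduces to this because $a - b \in G$ when $a, b \in G$.

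Thus the only genuinely used hypothesis is purity, and it is used only at the divisibility atomic steps; everything else is bookkeeping about subgroups and logical connectives. I do not expect any real obstacle: the choice of atomic schemas in $\mathfrak{L}^{\mrm{pure}}_{\aleph_1, \aleph_0}(\tau_{\mrm{AB}})$ is exactly calibrated so that each one is either preserved trivially or preserved by the defining property of $\leq_*$.
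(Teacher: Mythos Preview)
Your argument is correct: induction on the quantifier-free build-up of $\mathfrak{L}^{\mrm{pure}}_{\aleph_1, \aleph_0}(\tau_{\mrm{AB}})$, with purity handling the divisibility atoms and the subgroup inclusion handling the rest, is exactly the right (and essentially only) proof. The paper itself gives no proof at all, recording the statement as a bare Fact, so your write-up is simply the standard verification spelled out.
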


\begin{definition} Let $X$ and $G_1$ be as in Section~\ref{S3}. For $a \in G_1$ we let:
	$$\mathbb{P}_a = \{p \in \mathbb{P}: p^\infty \vert \, a\}.$$
\end{definition}

	\begin{cclaim}\label{intepretation_claim} Let $\mathbf{B}: \mathbf{K}^{\mrm{eq}}_\omega \rightarrow \mrm{TFAB}_\omega$ be as in Proof of Main Theorem of \cite{1205}. 
	\begin{enumerate}[(1)]
	\item There is an $\mathfrak{L}^{\mrm{pure}}_{\aleph_1, \aleph_0}(\tau_{\mrm{AB}})$-interpretation of $\mathbf{K}^{\mrm{eq}}_\omega$ into $\{\mathbf{B}(N) : N \in \mathbf{K}^{\mrm{eq}}_\omega\}$.
	\item There is $\psi_* \in \mathfrak{L}_{\aleph_1, \aleph_0}(\tau_{\mrm{AB}})$ such that for every countable abelian group $G$ we have that $G \models \psi_*$ if and only if there is $N \in \mathbf{K}^{\mrm{eq}}_\omega$ such that $G \cong \mathbf{B}(N)$.
\end{enumerate}
\end{cclaim}

	\begin{proof} We prove (1). Let $M$, $\mathfrak{m}$, $X$ and $G_1$ be as in Section~\ref{S3}, and $G = G_{\mathcal{U}} = G_{(1, \mathcal{U})}$, for $\mathcal{U} \subseteq M$. Notice that, although we fix $\mathcal{U} \subseteq M$ and $G$ for almost all the proof, all the formulas that we define below \underline{do not} depend on $\mathcal{U}$.
	\begin{enumerate}[$(\star_1)$]
	\item Let $\mathcal{E}_\star = \{(a, b) \in G_1 : a \neq 0 \neq b \wedge ma = nb, \text{ for some } m, n \in \mathbb{Z}^+ \}$.
	\end{enumerate}
	\begin{enumerate}[$(\star_{2})$]
	\item $\mathcal{E}_\star$ is a definable equivalence relation.
	\end{enumerate}
	\begin{enumerate}[$(\star_3)$]
	\item From here until $(\star_7)$, fix $\hat{x} \in X$.
	\end{enumerate}
	\begin{enumerate}[$(\star_4)$]
	\item We define a formula $\psi^{\hat{x}}(v)$ (so $v$ is a free variable) saying the following:
	\begin{enumerate}
	\item $v$ is $p^\infty$-divisible for every prime $p \in \mathbb{P}_{\hat{x}}$.
	\item $v$ is not $p^\infty$-divisible for every $p \in \mathbb{P}_{\sum_{\ell < k} q_\ell x_\ell}$, where:
	\begin{enumerate}
	\item $k \geq 2$;
	\item $(x_\ell : \ell < k) \in \mrm{seq}_k(X)$;
	\item $\bar{q} \in (\mathbb{Z}^+)^k$.
	\end{enumerate}
	\item $v \neq 0$ (this actually follows from (b)).
	\end{enumerate}
	\end{enumerate}
	\begin{enumerate}[$(\star_5)$]
	\item If $a \in G$, $y \in \hat{x}/E^{\mathfrak{m}}_1 \cap X_\mathcal{U}$ and $a \in y/\mathcal{E}_\star$, then $G \models \psi^{\hat{x}}(a)$.
	\end{enumerate}
	The fact that $(\star_5)$ holds is easy to see, recalling that $G \leq_* G_1$.
	\begin{enumerate}[$(\star_6)$]
	\item \begin{enumerate}[(a)]
	\item If $a \in G$ and $|\mrm{supp}(a)| \geq 2$, then $G \models \neg \psi^{\hat{x}}(a)$;
	\item If $y \in X_\mathcal{U}$, $a = qy \in G$ and $y \notin \hat{x}/E^{\mathfrak{m}}_1$, then $G \models \neg \psi^{\hat{x}}(a)$.
	\end{enumerate}
\end{enumerate}
Clauses (a) and (b) can be proved arguing as in the proof of \cite[Lemma~4.8]{1205}, in particular concerning clause (b) cf. the argument given in $(*_0)$ from the proof of \cite[Lemma~4.8]{1205}.
\begin{enumerate}[$(\star_7)$]
	\item If $a \in G$, then $G \models \psi^{\hat{x}}(a)$ iff $a \in \bigcup \{y/\mathcal{E}_\star : y \in \hat{x}/E^{\mathfrak{m}}_1\}$.
\end{enumerate}
This is by $(\star_5)$ and $(\star_6)$.
\begin{enumerate}[$(\star_8)$]
	\item If $\hat{x} \in X$, then for every $\mathcal{U}_1 \subseteq M$ we have:
	$$\hat{x} \in X_{\mathcal{U}_1} \; \Leftrightarrow \; \hat{x}/\mathcal{E}_\star \subseteq G_{(1, \mathcal{U}_1)}.$$
\end{enumerate}
	\begin{enumerate}[$(\star_9)$]
	\item For $\hat{x}, \hat{y} \in X$, we define a formula $\psi^{\hat{x}, \hat{y}}(v)$ saying the following: 
	\begin{enumerate}[(a)]
	\item $v$ is $p^\infty$-divisible for every prime $p \in \mathbb{P}_{\hat{x}-\hat{y}}$;
	\item $v$ is not $p^\infty$-divisible when for some $x \neq y \in X$ we have $(x, y) \notin (\hat{x}, \hat{y})/E^{\mathfrak{m}}_2$ and $p \in \mathbb{P}_{x-y}$.
	\end{enumerate}
\end{enumerate}
\begin{enumerate}[$(\star_{10})$]
	\item If $G \models \psi^{\hat{x}}(a) \wedge \psi^{\hat{y}}(b) \wedge \psi^{\hat{x}, \hat{y}}(a-b)$, then for some $x_1, y_1$ and $q \in \mathbb{Q}^+$ we have:

	\begin{enumerate}[(a)]
	\item $x_1, y_1 \in X_{\mathcal{U}}$;
	\item $a = qx_1 \in G$ and $b = qy_1 \in G$;
	\item $(x_1, y_1) E^{\mathfrak{m}}_2 (\hat{x}, \hat{y})$, so $x_1 \in \hat{x}/E^{\mathfrak{m}}_1 \cap X_{\mathcal{U}}$, $y_1 \in \hat{y}/E^{\mathfrak{m}}_1 \cap X_{\mathcal{U}}$.
	\end{enumerate}
\end{enumerate}
We show that $(\star_{10})$ holds. The existence of $x_1, y_1 \in X_{\mathcal{U}}$ such that $a \in x_1/\mathcal{E}_\star \cap X_{\mathcal{U}}$ and $b \in y_1/\mathcal{E}_\star \cap X_{\mathcal{U}}$  holds by $(\star_{7})$ and the assumption. Furthermore, as $G \models \psi^{\hat{x}}(a) \wedge \psi^{\hat{y}}(b) \wedge \psi^{\hat{x}, \hat{y}}(a-b)$, then necessarily $x_1, y_1 \in X_{\mathcal{U}}$.
Let now $a = q_1 x_1$ and $b = q_2 y_1$, for $q_1, q_2 \in \mathbb{Q}^+$. For the sake of contradiction suppose that $q_1 \neq q_2$. As $G \models \psi^{\hat{x}, \hat{y}}(a - b)$ we know that for every $p \in \mathbb{P}_{\hat{x} - \hat{y}}$ we have that $G \models p^\infty \, \vert \, (q_1x_1 - q_2 y_1)$. Let $q \in \mathbb{Z}^+$ be such that $qq_1, qq_2 \in \mathbb{Z}$ and let $p \in \mathbb{P}_{\hat{x} - \hat{y}}$ be $> |qq_1| + |qq_2|$.
Now, 
we can find $n$ and $(q^\ell, x^\ell, y^\ell : \ell < n)$ such that $x^\ell, y^\ell \in X_{\mathcal{U}}$, $q_\ell \in \mathbb{Z}^+$, $q^\ell (x^\ell - y^\ell) \in G$, $q \in \mathbb{Z}^+$ and $(x^\ell, y^\ell) \in (\hat{x}, \hat{y})/E^{\mathfrak{m}}_2$ and we have the following:
	$$q(q_1 x_1 - q_2 y_1) = \sum_{\ell < n}q^\ell (x^\ell - y^\ell) \;\;\mrm{mod}(\mathbb{Q}_pG_0 \cap G_1).$$
But analyzing the equation above we have that the sum of the coefficients on the LHS is $q(q_1 - q_2) \neq 0$ (recall that by assumption $q_1 \neq q_2$), whereas on the RHS it is zero, a contradiction. Finally, the fact that $(x_1, y_1) E^{\mathfrak{m}}_2 (\hat{x}, \hat{y})$ is by (b) of $(\star_{9})$.
\begin{enumerate}[$(\star_{11})$]
	\item Recalling \ref{the_auxiliary_eq}, for $i = 0, 1, 2$, let $\chi'_i(a, b)$ be the formula:
	$$\bigvee \{\psi^{\hat{x}}(a) \wedge \psi^{\hat{y}}(b) \wedge \psi^{\hat{x}, \hat{y}}(a-b): \hat{x}, \hat{y} \in X \text{ and } G_1 \models \hat{x} \mathcal{E}_i \hat{y} \}.$$
\end{enumerate}
	\begin{enumerate}[$(\star_{12})$]
\item For $i = 0, 1, 2$, let $\chi_i(a, b)$ be the formula:
	$$\exists a_1, b_1 (a \mathcal{E}_\star a_1 \wedge b \mathcal{E}_\star b_1 \wedge \chi'_i(a_1, b_1)).$$
\end{enumerate}
\begin{enumerate}[$(\star_{13})$]
	\item For $\mathcal{U} \subseteq M$, $a, b \in G = G_{(1, \mathcal{U})}$ and $i < 3$, we have that TFAE:
	\begin{enumerate}
	\item $G \models \chi_i(a, b)$;
	\item for some $\mathcal{E}_i$-equivalence class $Y \subseteq X_{\mathcal{U}}$ we have $a, b \in \bigcup \{x/\mathcal{E}_\star : x \in Y\}$.\end{enumerate}
	\end{enumerate}
	We show that $(\star_{13})$ holds. The interesting direction is ``(a) implies (b)''. So assume that $G \models \chi_i(a, b)$, then there are $a_1 \in a/\mathcal{E}_\star$ and $b_1 \in b/\mathcal{E}_\star$ such that $G \models \chi'_i(a_1, b_1)$. Hence, for some $\hat{x}, \hat{y} \in X$ we have that:
	\begin{enumerate}[(i)]
	\item  $G \models \psi^{\hat{x}}(a_1) \wedge \psi^{\hat{y}}(b_1) \wedge \psi^{\hat{x}, \hat{y}}(a_1 - b_1)$;
	\item $\hat{x} \mathcal{E}_i \hat{y}$.
	\end{enumerate}
	Now, by $(\star_{10})$, for some $x \in \hat{x}/E^{\mathfrak{m}}_1 \cap X_{\mathcal{U}}$, $y \in \hat{y}/E^{\mathfrak{m}}_1 \cap X_{\mathcal{U}}$ and $q \in \mathbb{Q}^+$ we have that $a_1 = qx, b_1 = qy \in G_1$ and $(x, y) E^{\mathfrak{m}}_2 (\hat{x}, \hat{y})$. But by \cite[Claim4.11(1)]{1205} we have that $(x \mathcal{E}_i y)$ iff $(\hat{x} \mathcal{E}_i \hat{y})$, and so by (ii) above we are done. This is enough for our purposes as we can now interpret a model isomorphic to $M \restriction \mathcal{U}$ in $G_{(1, \mathcal{U})} = G$ in the following manner:
	\begin{enumerate}[$(\star_{14})$]
	\item
	\begin{enumerate}[(a)]
	\item the domain of the interpretation is $\{a \in G : G \models \chi_2(a, a)\}$ (cf. what was said in the beginning of the present section, so this correspond to the $\partial_\Gamma$ from \cite[pg. 212]{hodges});
	\item equality is interpreted as $\chi_2(a, b)$ (recall that $\mathfrak{E}_2$ is $=$ on $M$, cf. the beginning of Section~\ref{S3});
	\item we interpret $\mathfrak{E}_i$ as $\chi_i(a, b)$.
	\end{enumerate}
	\end{enumerate}
This concludes the proof of (1). We now prove (2).
\begin{enumerate}[$(*_1)$]
\item For $s \in M$ and $\hat{x} \in X'_s$, let $\varphi^*_{(s, \hat{x})}(v) \in \mathfrak{L}_{\aleph_1, \aleph_0}(\tau_{\mrm{AB}})$ say:
\begin{enumerate}
	\item $\chi_2(a, a)$;
	\item for $p \in \mathbb{P}_{\hat{x}}$, $p^\infty \vert a$.
\end{enumerate}
\end{enumerate}
\begin{enumerate}[$(*_2)$]
\item For $s \in M$ and $\hat{x} \in X'_s$, let $\psi^*_{(s, x)}(v) \in \mathfrak{L}_{\aleph_1, \aleph_0}(\tau_{\mrm{AB}})$ be such that, for every abelian group $C$ and $a \in C$, $C \models \psi^*_{(s, \hat{x})}(a)$ if and only if:
\begin{enumerate}
	\item $C \in \mrm{TFAB}$;
	\item $C \models \varphi^*_{(s, \hat{x})}(a)$;
	\item $C$ is the pure closure of the subgroup generated by:
	$$A = \{b \in C : C \models \chi_*(b, a)\},$$
where $\chi_*(b, a)$ is the formula:
	$$\bigvee \{\psi^{\hat{x}}(a) \wedge \psi^{\hat{y}}(b) \wedge \psi^{\hat{x}, \hat{y}}(a-b): \hat{y} \in X \};$$
	\item the set $A$ is (linearly) independent;
	\item if $b \in A$ and $C \models \psi^{\hat{x}}(a) \wedge \psi^{\hat{y}}(b) \wedge \psi^{\hat{x}, \hat{y}}(a-b)$, then for every $b' \in C$ we have that $C \models \psi^{\hat{x}}(a) \wedge \psi^{\hat{y}}(b') \wedge \psi^{\hat{x}, \hat{y}}(a-b')$ implies that $b = b'$;
	\item moreover, if $C \models \psi^{\hat{x}}(a) \wedge \psi^{\hat{y}}(b) \wedge \psi^{\hat{x}, \hat{y}}(a-b)$, then $b$ and $\hat{x}$ determine $\hat{y}$ uniquely;
	\item For $i = 0, 1, 2$, $\chi_i(u, v)$ is an equivalence relation on $A$ with infinitely many equivalence classes, and, for $i = 0, 1$, each equivalence class is infinite;
	\item if $k < \omega$, $(b_\ell : \ell < k) \in A^k$ is without repetitions and, $q_\ell \in \mathbb{Z}^+$, for $\ell < k$,  and $p$ is a prime such that $p$ does not divide $q_\ell$ for some $\ell < k$, \underline{then}, letting $\hat{y}_\ell \in X$ be such that $\psi^{\hat{x}, \hat{y}}(a-b_\ell)$, the following are equivalent:
	\begin{enumerate}[($\cdot_1$)]
	\item $p \in \mathbb{P}_{\sum_{\ell < k} q_\ell y_\ell}$;
	\item $p \, \vert \sum \{q_\ell b_\ell : \ell < k \}$;
	\item $p^\infty \, \vert \, \{q_\ell b_\ell : \ell < k\}$;
	\end{enumerate}
	\item if $t \in M$ and $\hat{z}_1, \hat{z}_2 \in X'_t$, then ($\cdot_1$) $\Leftrightarrow$ ($\cdot_2$), where:
	\begin{enumerate}[($\cdot_1$)]
	\item $C \models \psi^{\hat{x}}(a) \wedge \psi^{\hat{z}_1}(b_1) \wedge \psi^{\hat{x}, \hat{z}_1}(a-b_1)$, for some $b_1 \in C$;
	\item $C \models \psi^{\hat{x}}(a) \wedge \psi^{\hat{z}_2}(b_2) \wedge \psi^{\hat{x}, \hat{z}_2}(a-b_2)$, for some $b_2 \in C$;
	\end{enumerate}
	\item $A$ is a maximal independent subset of $C$.
\end{enumerate}
\end{enumerate}
\begin{enumerate}[$(*_3)$]
\item If $s \in M$, $\hat{x} \in X'_s$ and $s \in \mathcal{U} \subseteq M$  with $\mathcal{U}$ infinite, then $G_{\mathcal{U}} \models \psi^*_{(s, \hat{x})}(\hat{x})$.
\end{enumerate}
[Why? By construction.]
\begin{enumerate}[$(*_4)$]
\item For any countable abelian group $C$, $s \in M$ and $\hat{x} \in X'_s$ we have that: $C \models \exists u \psi^*_{(s, \hat{x})}(u)$ \underline{if and only if}, for some infinite $\mathcal{U} \subseteq M$ with $s \in \mathcal{U}$, $C \cong \mathbf{B}(M \restriction \mathcal{U})$.
\end{enumerate}
We prove $(*_4)$. The right-to-left implication is by $(*_3)$. Concerning the other implication, suppose that $C \models \exists u \psi^*_{(s, \hat{x})}(u)$ and let $a_* \in C$ be such that $C \models \psi^*_{(s, \hat{x})}(a_*)$. Now, recalling the definition of $\psi^*_{(s, \hat{x})}(u)$ from  $(*_2)$, for $b \in A$, let:
$$Y_b = \{\hat{y} \in X : \psi^{\hat{x}}(a_*) \wedge \psi^{\hat{y}}(b) \wedge \psi^{\hat{x}, \hat{y}}(a_*-b)\}$$
\begin{enumerate}[$(*_{4.1})$]
\item If $b \in A$, then $|Y_b| = 1$.
\end{enumerate}
[Why? By $(*_{2})$.]
\begin{enumerate}[$(*_{4.2})$]
\item Let $g_0$ be the function with domain $A$ such that $g_0(b) \in Y_b$, for $b \in A$.
\end{enumerate}
[Why $g_0$ exists? By $(*_{4.1})$.]
\begin{enumerate}[$(*_{4.3})$]
\item 
\begin{enumerate}[(a)]
	\item Let $Y = \mrm{ran}(g)$;
	\item $\mathcal{U} = \{t \in M : Y \cap X'_t \neq \emptyset\}$.
\end{enumerate}
\end{enumerate}
\begin{enumerate}[$(*_{4.4})$]
\item $Y = \bigcup \{X'_t : t \in \mathcal{U}\}$.
\end{enumerate}
[Why $(*_{4.4})$? The inclusion $\subseteq$ holds by $(*_{4.3})$(b). The other inclusion is by~$(*_{2})$(i).]
\begin{enumerate}[$(*_{4.5})$]
\item $Y = X_\mathcal{U}$.
\end{enumerate}
[Why? By $(*_{4.4})$ and the choice of $X_{\mathcal{U}}$.]
\begin{enumerate}[$(*_{4.6})$]
\item There is an isomorphism $g_0 \subseteq g_1: \langle A \rangle_C \cong G_{(0, \mathcal{U})} = \sum \{\mathbb{Z}y : y \in Y = X_{\mathcal{U}}\}$.
\end{enumerate}
[Why? As $A$ is independent by $(*_{2})$(d).]
\begin{enumerate}[$(*_{4.7})$]
\item There is an isomorphism $g_1 \subseteq g_2: \langle A \rangle^*_C \cong G_{(1, \mathcal{U})} = \langle Y \rangle^*_{G_{(1, \mathcal{U})}}$.
\end{enumerate}
[Why? By $(*_{2})$(h) and $(*_{4.5})$.]
\newline Hence, $(*_4)$ holds indeed.
\begin{enumerate}[$(*_5)$]
\item For any countable abelian group $C$, we have that 
$$C \models \bigvee \{ \exists u \psi^*_{(s, \hat{x})}(u) : s \in M, \hat{x} \in X'_s\} := \psi_*$$
if and only if, for some infinite $\mathcal{U} \subseteq M$, $C \cong \mathbf{B}(M \restriction \mathcal{U})$.
\end{enumerate}
\end{proof}

	\begin{proof}[Proof of Theorem~\ref{first_theorem}] This follows from \ref{intepretation_claim} together with the well-known fact that $\mathbf{K}^{\mrm{eq}}_\omega$ is faithfully Borel complete (folklore, see also the proof of Fact~\ref{embed_2equiv}).
\end{proof}

\section{Pure embeddability is a complete analytic quasi-order}

\begin{fact}\label{embed_2equiv} There is a Borel map $\mathbf{B}$ from $\mrm{Graph_\omega}$ into $\mathbf{K}^{\mrm{eq}}_\omega$ such that we have:
$$H_1 \text{ embeds into } H_2 \; \Leftrightarrow \; \mathbf{B}(H_1) \text{ embeds into }\mathbf{B}(H_2).$$
\end{fact}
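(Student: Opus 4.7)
The plan is to encode each countable graph $H$ with vertex set $\omega$ as a structure $\mathbf{B}(H) \in \mathbf{K}^{\mrm{eq}}$ of ``incidence'' type, in which the $\mathfrak{E}_0$-classes represent the vertices of $H$ and the $\mathfrak{E}_1$-classes represent the edges. Concretely, I would take the underlying set of $\mathbf{B}(H)$ to consist of triples $(v, e, k)$ where $v \in V(H) = \omega$, $e$ is either an edge of $H$ containing $v$ or a formal loop $\{v\}$ (added uniformly so that even isolated vertices yield at least one element), and $k \in \omega$; then declare $(v, e, k)\, \mathfrak{E}_0\, (v', e', k') \iff v = v'$ and $(v, e, k)\, \mathfrak{E}_1\, (v', e', k') \iff e = e'$, with $\mathfrak{E}_2$ being equality as required. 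Transporting the result to the domain $\omega$ via a fixed Borel bijection makes $H \mapsto \mathbf{B}(H)$ Borel.

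For the forward direction, given a graph embedding $\iota : H_1 \hookrightarrow H_2$, extend $\iota$ to edges by $\iota(\{u,v\}) = \{\iota(u),\iota(v)\}$ and to formal loops by $\iota(\{v\}) = \{\iota(v)\}$; then $(v, e, k) \mapsto (\iota(v), \iota(e), k)$ is immediately an embedding $\mathbf{B}(H_1) \hookrightarrow \mathbf{B}(H_2)$ in $\mathbf{K}^{\mrm{eq}}$.

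For the reverse direction, suppose $f : \mathbf{B}(H_1) \hookrightarrow \mathbf{B}(H_2)$. Since $f$ preserves both $\mathfrak{E}_i$ and $\neg \mathfrak{E}_i$ for $i = 0, 1$, it induces injections on the quotient sets of $\mathfrak{E}_0$-classes and of $\mathfrak{E}_1$-classes. The $\mathfrak{E}_0$-classes of $\mathbf{B}(H_j)$ are canonically indexed by $V(H_j)$, so this gives an injection $\iota : V(H_1) \to V(H_2)$. The key observation is that in $\mathbf{B}(H)$ an $\mathfrak{E}_1$-class corresponds to a true edge of $H$ iff it meets exactly two distinct $\mathfrak{E}_0$-classes, and to a formal loop iff it meets only one: a first-order property of the $\mathbf{K}^{\mrm{eq}}$-structure, hence preserved by $f$. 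Consequently $f$ carries true-edge $\mathfrak{E}_1$-classes to true-edge $\mathfrak{E}_1$-classes, and for $\{u, v\} \in E(H_1)$ the image $\mathfrak{E}_1$-class meets the $\mathfrak{E}_0$-classes of $\iota(u)$ and $\iota(v)$, forcing $\{\iota(u), \iota(v)\} \in E(H_2)$. Thus $\iota$ is a graph embedding.

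The main (mild) difficulty is ensuring that the vertex-data and the edge-data are cleanly separable, so that an arbitrary $\mathbf{K}^{\mrm{eq}}$-embedding cannot confuse their roles; the formal-loop device together with the one-vs-two $\mathfrak{E}_0$-class criterion for $\mathfrak{E}_1$-classes handles exactly this, and the remainder of the verification is routine.
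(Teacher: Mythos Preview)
There is a genuine gap in the reverse direction. You verify that the induced vertex map $\iota\colon V(H_1)\to V(H_2)$ carries edges to edges, but you never check that it carries \emph{non}-edges to non-edges; a graph embedding in the model-theoretic sense (the sense in play here, and certainly the one you are using on the $\mathbf{K}^{\mrm{eq}}$-side) must preserve both $R$ and $\neg R$. Your appeal to ``an $\mathfrak{E}_1$-class corresponds to a true edge iff it meets exactly two distinct $\mathfrak{E}_0$-classes \dots\ a first-order property \dots\ hence preserved by $f$'' does not help: this is the conjunction of an existential statement (``meets at least two'') and a universal one (``meets at most two''), and an embedding only transfers the existential half upward. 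Nothing prevents a loop $\mathfrak{E}_1$-class from landing inside an edge $\mathfrak{E}_1$-class, and, more to the point, nothing in your structure records the \emph{absence} of an edge between two given vertices.

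Concretely, take $H_1=(\omega,\emptyset)$ and let $H_2$ be the complete graph on $\omega$. Then $H_1$ does not embed into $H_2$, since $H_2$ has no infinite independent set. But in your encoding $\mathbf{B}(H_1)$ consists solely of loop-triples $(v,\{v\},k)$, and the map $(v,\{v\},k)\mapsto(v,\{v\},k)$ is a $\mathbf{K}^{\mrm{eq}}$-embedding of $\mathbf{B}(H_1)$ into $\mathbf{B}(H_2)$: on these elements both $\mathfrak{E}_0$ and $\mathfrak{E}_1$ reduce to ``same first coordinate'' in either structure, so all (non-)relations are preserved. The missing idea is that non-edges must be witnessed \emph{positively} in $\mathbf{B}(H)$, so that any $\mathbf{K}^{\mrm{eq}}$-embedding is forced to respect them as well. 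The paper's construction moves in this direction by attaching an element to \emph{every} ordered pair $(n,m)$, not only to the edges that are present, and then letting the $E_2$-relation between $(n,m)$ and $(m,n)$ record whether $nR^H m$.
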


\begin{proof} This is folklore but we add details for the benefit of the reader. For a graph $H = (H, R^H)$ with domain $\subseteq \omega$ we define a model $M = \mathbf{B}(H)$ of the theory of two equivalence relations with set of elements $\omega \cup \omega \times \omega$ defining $E^M_1, E^M_2$ as follows:
	\begin{enumerate}[(1)]
	\item $E^M_1$ partitions $M$ into the sets $X_n = \{n\} \cup \{ (n, m) : m< \omega \}$, for $n < \omega$;
	\item $E^M_2 = \{(n, m) : n, m < \omega\} \cup \{((n, m), (m, n)) : nR^H m\} \cup =_M$.
	\end{enumerate}
It is easy to see that $E^M_1$ and $E^M_2$ define equivalence relations on $M$ and so $M = \mathbf{B}(H) \in \mathbf{K}^{\mrm{eq}}_\omega$.
Notice now that $H$ is first-order interpretable in $\mathbf{B}(H)$ as follows:
\begin{enumerate}[(A)]
	\item the domain of the interpretation is the set of elements $\varphi_0(x)$ such that $x/E_2$ has at least three elements and equality is interpreted as equality;
	\item the edge relation on $\varphi_0(M)$ is defined as $\varphi_R(x, y)$ iff there are $x_1$ and $y_1$ s.t.:
	$$x E_1 x_1 \wedge x_1 E_2 y_1 \wedge y_1 E_1 y.$$
\end{enumerate}
It is then easy to see that the Borel map $H \mapsto \mathbf{B}(H)$ is as wanted.
\end{proof}

	\begin{proof}[Proof of Theorem~\ref{second_theorem}] First of all, notice that with a slight abuse of notation (but not a problematic one) we consider models with domain $\subseteq \omega$ instead of simply $\omega$. Notice now that the Borel map $\mathbf{B}$ from the proof of the Main Theorem from \cite{1205} is such that for $H_1, H_2 \in \mathbf{K}^{\mrm{eq}}_\omega$ we have that:
	$$H_1 \text{ embeds into } H_2 \; \Leftrightarrow \; \mathbf{B}(H_1) \text{ embeds purely into }\mathbf{B}(H_2),$$
and so by Fact~\ref{embed_2equiv} we are done as it was proved in \cite{LouRos} that embeddability between countable graphs is a complete analytic quasi-order. Finally, it is easy to see that all the torsion-free abelian groups in our construction are elementary equivalent to $\mathbb{Z}^{(\omega)}$ and it is well-known that elementary embeddability among models of a complete theory of $\mrm{TFAB}$ corresponds to pure embeddability, see e.g. \cite[Appendix~6.2]{hodges}.
\end{proof}

\end{document}